\newtheorem{theorem}{Theorem}
\newtheorem{proposition}{Proposition}
\def\bigO{\mathcal{O}}
\def\rk#1{\left\lceil \frac{R-#1}{2}\right\rceil}
\def\bigO{\mathcal{O}}
\title[Stability of approximate Taylor methods for ODE]{On the stability of Approximate Taylor methods for ODE and
  their relationship with Runge-Kutta schemes} 
\author{A.~Baeza}
\thanks{Departament de Matemàtiques,  Universitat de
  València. Av. Vicent Andr\'es Estell\'es, 46100 Burjassot  (Spain); email: antonio.baeza@uv.es
}
\author{S.~Boscarino}
\thanks{Department of Mathematics and Computer Science, University of Catania, 
 95125 Catania (Italy); email: boscarino@dmi.unict.it
}
\author{P.~Mulet}
\thanks{Departament de Matemàtiques,  Universitat de
  València. Av. Vicent Andr\'es Estell\'es, 46100 Burjassot  (Spain);
  email:  pep.mulet@uv.es
}
\author{G.~Russo}
\thanks{
Department of Mathematics and Computer Science, University of Catania, 
 95125 Catania (Italy); email:  russo@dmi.unict.it
}
\author{D.~Zorío}
\thanks{
Centro de Investigaci\'on en Ingenier\'{\i}a Matem\'atica, Universidad
de Concepci\'on, Casilla 160C Concepci\'on (Chile); email: dzorio@ci2ma.udec.cl
}
\date{}
\begin{document}

\begin{abstract}
  In [Baeza et al., Computers and Fluids, \textbf{159}, 156--166 (2017)]
  a new method for the numerical solution of ODEs is presented.
  This methods can be regarded as an approximate formulation of the
  Taylor methods and 
  it   follows an approach 
that
  has  a much easier implementation than the
  original Taylor methods, since only the functions in the ODEs, and
  not their high order derivatives, are needed. In this reference, the
  absolute stability region of the new methods is conjectured to be coincident
  with that of their exact counterparts. There is also a conjecture
  about their relationship with Runge-Kutta methods. In this work we
  answer positively both conjectures.

\keywords{
ODE integrators, Taylor methods
}
\end{abstract}
\maketitle

\section{Introduction}
In \cite{BaezaBoscarinoMuletEtAl2017}  we develop a numerical scheme
consisting on an 
approximate formulation of  Taylor methods for ODEs, akin to the
scheme proposed by Zorío et. al. in  
\cite{ZorioBaezaMulet17} for hyperbolic conservation laws.

In \cite{BaezaBoscarinoMuletEtAl2017} we show that high order schemes can be
achieved with a computational cost that is competitive with that of
Taylor methods, with the advantage that the approximate Taylor methods
do not require the knowledge of higher order derivatives of the
function in the ODE. In the cited reference there were the conjectures
about the absolute stability region of the approximate methods
coinciding with that of their exact counterparts and that they can be
cast as Runge-Kutta schemes.   We give here positive   answers to
both conjectures. 

This paper is organized as follows:  in Section \ref{cka}  we recall 
 the  approximate Taylor methods  and analyze their  stability; in
 Section \ref{rk} we 
prove that the approximate Taylor methods are indeed Runge-Kutta
methods.

\section{ Approximate Taylor methods}\label{cka}

Without loss of generality, we consider initial value problems for
systems of $m$ autonomous  ODEs for
the unknown $u=u(t)$:
\begin{equation}\label{eq:hcl}
    u'=f(u), u(0)=u_0,
\end{equation}
 for nonautonomous systems can be cast as autonomous systems by
considering $t$ as a new unknown and by correspondingly inserting a
new equation $t'=1$ and initial condition $t(0)=0$.

We aim to obtain approximations   $v_{h,n}\approx u(t_n) $ of the
solution $u$ of \eqref{eq:hcl} on  $t_n=nh $, 
$n=0,\dots$,  with spacing $h>0$.
We use the following notation for time derivatives of $u$:
   \begin{align*}
     u_{h,n}^{(l)}&=\frac{d^{l} u}{d t^l}(t_n).
   \end{align*}

Our goal is to obtain an $R$-th order accurate numerical scheme, i.e., a scheme
with a local truncation error 
of order $R+1$, based on the Taylor expansion  of the solution $u$
from time $t_n$ to the next time $t_{n+1}$:
$$u_{h,n+1}^{(0)}=u(t_{n+1})=\sum_{l=0}^R\frac{h^l}{l!}u_{h,n}^{(l)}+\bigO(h^{R+1}).$$
To achieve this we aim to define  approximations
$v_{h,n}^{(l)}\approx u_{h,n}^{(l)}$, with $v^{(0)}_{h,n}=v_{h,n}$,  by recursion on $l$, such that 
\begin{align}\label{eq:200}
  v_{h,n}^{(0)}=u_{h,n}^{(0)}=u(t_n) \Rightarrow
  v_{h,n}^{(l)}=u_{h,n}^{(l)}+\bigO(h^{R+1-l}), l=1,\dots,R.
\end{align}

Let us first briefly review how Taylor methods for \eqref{eq:hcl} are obtained,
by a simple scalar ($m=1$) example for third order accuracy (see, for instance,
\cite{HairerNorsettWanner93} for more details). 
 The fact that $u$ solves \eqref{eq:hcl} implies that
 \begin{align*}
   u''&=f(u)'=f'(u)u'\\
   u'''&=f(u)''=f''(u)(u')^2+f'(u)u'',
 \end{align*}
 which yields 
 \begin{equation}\label{eq:40}
  \begin{aligned}
u_{h,n}^{(1)}&=u'(t_n)=f(u_n^{(0)})\\
u_{h,n}^{(2)}&=u''(t_n)=f'(u_n^{(0)})u_n^{(1)}\\
u_{h,n}^{(3)}&=u'''(t_n)=f''(u_n^{(0)})(u_{h,n}^{(1)})^2+f'(u)u_{h,n}^{(2)}.
\end{aligned}
\end{equation}
We have just proven that the definition 
 \begin{equation*}
  \begin{aligned}
v_{h,n}^{(1)}&=f(v_{h,n}^{(0)})\\
v_{h,n}^{(2)}&=f'(v_{h,n}^{(0)})v_{h,n}^{(1)}\\
v_{h,n}^{(3)}&=f''(v_{h,n}^{(0)})(v_{h,n}^{(1)})^2+f'(u)(v_{h,n}^{(2)}),
\end{aligned}
\end{equation*}
satisfies \eqref{eq:200} with no error and that 
 \begin{equation*}
  \begin{aligned}
v_{h,n+1}&=v_{h,n}+hv_{h,n}^{(1)}+\frac{h^2}{2}v_{h,n}^{(2)}+\frac{h^3}{6}v_{h,n}^{(3)}
\end{aligned}
\end{equation*}
yields a third order scheme, the \textit{exact} third order Taylor scheme.

It is readily seen  that the $R$-th order Taylor methods applied to
the equation $u'=\lambda u$ is given by
\begin{equation}\label{eq:QTaylor}
  v_{h,n+1}=Q(h\lambda) v_{h,n}, \quad
  Q(x)=1+x+\frac{x^2}{2}+\dots+\frac{x^R}{R!}, 
\end{equation}
and, correspondingly,  their region of absolute stability is
\begin{equation*}
  \{ z\in\mathbb C \colon |Q(z)|\leq 1 \}.
\end{equation*}

We propose in \cite{BaezaBoscarinoMuletEtAl2017} 
 an alternative solver, which is much less expensive for large
$m, R$
and agnostic about the equation, in the sense that its only
requirement is the knowledge of the function $f$.
 These solvers are based on the observation that  approximations of
 \eqref{eq:40} 
can be easily obtained by using finite differences of enough order,
rather than using those  expressions, that  would only be exact for the
analysis of the local truncation error.

For the sake of completeness we briefly describe here the scheme
proposed in \cite{BaezaBoscarinoMuletEtAl2017}, along with  the following
auxiliary notation:
For a function $u\colon \mathbb R\to \mathbb R^{m}$, we denote its
sampling  on  the grid defined by a base
  point $a$ and grid space $h$ by
  \begin{equation*}
    G_{a,h}(u)\colon\mathbb{Z} \to \mathbb {R}^{m},\quad
    G_{a,h}(u)_i=u(a+ih).
  \end{equation*}
For naturals $p,q$, we denote by  $ \Delta^{p,q}_{h}$
the centered finite differences operator that  approximates $p$-th order
  derivatives to order $2q$ on grids with spacing $h$, which, for any $u$
  sufficiently differentiable,  satisfies (see \cite[Proposition
  1]{ZorioBaezaMulet17} for the  details):
  \begin{equation}\label{eq:3}
    \left|\Delta^{p,q}_{h}       G_{a,h}(u)-u^{(p)}(a)\right|\leq
      K_{p,q}\max\{ |u^{(p+2q)}(x)|\colon |x-a|\leq r_{p,q}h\}h^{2q},
  \end{equation}
  for some $r_{p,q}\in\mathbb N$.
  
  Given $v_{h,n}$, 
the approximations $v^{(k)}_{h,n} \approx
u^{(k)}_{h,n}$ are defined by recursion on $k=0,\dots,R$ as follows:
\begin{equation}\label{eq:15}
  \begin{aligned}
    v^{(0)}_{h,n}&=v_{h,n}\\
    v^{(1)}_{h,n}&=f(v_{h,n})\\
    v^{(k+1)}_{h,n} &=
    \Delta_{h}^{k,\left\lceil \frac{R-k}{2}\right\rceil}\Big(G_{0,h}\big(f(T_{h,n}^{k})\big)\Big),\\
  \end{aligned}
\end{equation}   
where $T_{h,n}^{k}$ is the $k$-th degree approximate Taylor polynomial given by
\begin{align}\label{eq:300}
  T_{h,n}^{k}(\rho)=\sum_{l=0}^{k}\frac{v^{(l)}_{h,n} }{l!} \rho^l.
\end{align}

With all this notation, the proposed scheme is:
\begin{equation}\label{eq:60}
  v_{h,n+1}=T_{h,n}^{R}(h)=\sum_{l=0}^{R}\frac{v_{h,n}^{(l)}}{l!}h^{l}=v_{h,n}+h\sum_{l=1}^{R}w_{h,n}^{(l)},\quad
  w_{h,n}^{(l)}:=\frac{v_{h,n}^{(l)}}{l!}h^{l-1}.
\end{equation}

The following result is proven in \cite{BaezaBoscarinoMuletEtAl2017}
and it is a simplified adaptation of the corresponding
result in \cite{ZorioBaezaMulet17}.

\begin{theorem}\label{th:1}
  The scheme defined by \eqref{eq:15} and \eqref{eq:60} is $R$-th
  order accurate. 
\end{theorem}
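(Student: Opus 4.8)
The plan is to reduce order-$R$ consistency to the accuracy of the intermediate approximations, namely to property \eqref{eq:200}, and then to establish \eqref{eq:200} by induction on $k$. Granting $v_{h,n}^{(l)}=u_{h,n}^{(l)}+\bigO(h^{R+1-l})$ for $l=0,\dots,R$, a direct substitution into \eqref{eq:60} and comparison with the exact Taylor expansion gives $v_{h,n+1}-u(t_{n+1})=\sum_{l=0}^R \frac{1}{l!}\big(v_{h,n}^{(l)}-u_{h,n}^{(l)}\big)h^l + \bigO(h^{R+1})=\bigO(h^{R+1})$, since each summand is $\bigO(h^{R+1-l})\,h^l=\bigO(h^{R+1})$. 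Thus order $R$ follows once \eqref{eq:200} is proven.

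For \eqref{eq:200} I would induct on $k$. The cases $k=0,1$ are immediate from \eqref{eq:15}, because $v_{h,n}^{(0)}=u(t_n)$ and $v_{h,n}^{(1)}=f(v_{h,n}^{(0)})=f(u(t_n))=u_{h,n}^{(1)}$ hold exactly. For the inductive step, assume $v_{h,n}^{(l)}=u_{h,n}^{(l)}+\bigO(h^{R+1-l})$ for $0\le l\le k$ and set $q=\rk{k}$, so that $2q\ge R-k$. Introduce the smooth function $g(\rho)=f(u(t_n+\rho))$, which by \eqref{eq:hcl} satisfies $g^{(k)}(0)=u^{(k+1)}(t_n)=u_{h,n}^{(k+1)}$. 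By the finite-difference bound \eqref{eq:3}, $\Delta_h^{k,q}(G_{0,h}(g))=u_{h,n}^{(k+1)}+\bigO(h^{2q})=u_{h,n}^{(k+1)}+\bigO(h^{R-k})$, so it suffices to control the perturbation
\begin{equation*}
  v_{h,n}^{(k+1)}-\Delta_h^{k,q}(G_{0,h}(g))=\Delta_h^{k,q}\big(G_{0,h}(F)\big),\qquad F(\rho):=f(T_{h,n}^{k}(\rho))-g(\rho).
\end{equation*}

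The delicate point is that the crude estimate $F(ih)=\bigO(h^{k+1})$ only yields $\bigO(h)$ after the division by $h^k$ implicit in $\Delta_h^{k,q}$, which is far too weak. I would instead exploit that the centered operator is \emph{exact} on polynomials of degree up to $k+2q-1$: writing $\Delta_h^{k,q}(G_{0,h}(\psi))=h^{-k}\sum_i c_i\,\psi(ih)$, the stencil moments satisfy $\sum_i c_i i^{\,j}=0$ for $0\le j\le k+2q-1$ with $j\ne k$, and $\sum_i c_i i^{k}=k!$. Expanding $F$ by Taylor's theorem to order $k+2q$ about $\rho=0$, every polynomial contribution except the $j=k$ term is annihilated, leaving
\begin{equation*}
  \Delta_h^{k,q}\big(G_{0,h}(F)\big)=F^{(k)}(0)+\bigO\!\big(h^{2q}\max|F^{(k+2q)}|\big).
\end{equation*}
The remainder is $\bigO(h^{R-k})$ because $2q\ge R-k$ and, since the coefficients $v_{h,n}^{(l)}$ of $T_{h,n}^k$ stay bounded as $h\to0$ and $f$ is sufficiently smooth, the derivatives of $F$ up to order $k+2q$ are bounded uniformly in $h$ on the finite stencil.

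Finally I would evaluate $F^{(k)}(0)$ by Faà di Bruno: both $\tfrac{d^k}{d\rho^k}f(T_{h,n}^k(\rho))|_0$ and $g^{(k)}(0)$ are the \emph{same} universal polynomial $B_k$ in the derivatives at $\rho=0$, evaluated respectively at $(v_{h,n}^{(1)},\dots,v_{h,n}^{(k)})$ and $(u_{h,n}^{(1)},\dots,u_{h,n}^{(k)})$, the base point $v_{h,n}^{(0)}=u(t_n)$ being common to both. Smoothness of $B_k$ together with the inductive hypothesis then gives $F^{(k)}(0)=B_k(v)-B_k(u)=\bigO(h^{R+1-k})$, dominated by the $l=k$ difference. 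Combining, $v_{h,n}^{(k+1)}=u_{h,n}^{(k+1)}+\bigO(h^{R-k})$, which closes the induction. The main obstacle is precisely this cancellation: one must invoke the polynomial-exactness (moment conditions) of the finite-difference stencil and the coincidence of the Faà di Bruno polynomials, rather than any termwise size bound on $F$.
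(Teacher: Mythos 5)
Your proof is correct. Note that the paper itself does not include a proof of this theorem (it is cited to [Baeza et al. 2017] as an adaptation of [Zor\'{\i}o et al. 2017]), but your argument follows exactly the strategy the paper sets up: reduce order-$R$ accuracy to property \eqref{eq:200}, prove \eqref{eq:200} by induction on $k$ using the full strength of the finite-difference estimate \eqref{eq:3} (i.e.\ its polynomial exactness, not a termwise size bound) together with the observation that $\rho\mapsto f(T_{h,n}^k(\rho))$ and $\rho\mapsto f(u(t_n+\rho))$ have $k$-th derivatives at $0$ given by the same Fa\`a di Bruno expression evaluated at $\bigO(h^{R+1-l})$-close arguments; this is the substance of the referenced proof.
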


The following result, which corresponds to \eqref{eq:QTaylor} and  
has been established for orders $R=2,3,4$ 
in \cite{BaezaBoscarinoMuletEtAl2017}, is next proven for any $R$.

\begin{proposition}
   When applied to
  homogeneous linear   systems $u'=Au$, for a $m\times m$ matrix $A$, 
  the scheme defined by \eqref{eq:15} and \eqref{eq:60} coincides with
  the exact Taylor method of the same order  and therefore has the same
  stability region.
\end{proposition}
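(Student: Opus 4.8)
The plan is to prove, by induction on $k$, that when $f(u)=Au$ the approximate derivatives generated by \eqref{eq:15} are \emph{exactly} the powers $v^{(k)}_{h,n}=A^k v_{h,n}$ for $k=0,\dots,R$. Granting this, the update \eqref{eq:60} reads $v_{h,n+1}=\sum_{l=0}^R\frac{h^l}{l!}A^l v_{h,n}$, which is precisely the $R$-th order exact Taylor step for $u'=Au$ (whose true derivatives satisfy $u^{(l)}=A^l u$); specializing to the scalar case $A=\lambda$ gives $v_{h,n+1}=Q(h\lambda)v_{h,n}$ with $Q$ as in \eqref{eq:QTaylor}, so the two methods share the same absolute stability region.

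First I would dispatch the base cases $k=0,1$, which hold by definition: $v^{(0)}_{h,n}=v_{h,n}$ and $v^{(1)}_{h,n}=f(v_{h,n})=Av_{h,n}$. For the inductive step, suppose $v^{(l)}_{h,n}=A^l v_{h,n}$ for all $l\le k$, where $k\le R-1$. Linearity of $f$ then turns the approximate Taylor polynomial \eqref{eq:300} into
\begin{equation*}
  f\big(T^k_{h,n}(\rho)\big)=A\,T^k_{h,n}(\rho)=\sum_{l=0}^{k}\frac{A v^{(l)}_{h,n}}{l!}\,\rho^l,
\end{equation*}
a polynomial in $\rho$ of degree at most $k$.

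The key step---and the only one requiring an idea rather than bookkeeping---is the observation that the centered difference operator $\Delta^{k,q}_h$ with $q=\rk{k}$ is \emph{exact} on polynomials of degree up to $k+2q-1$. This is immediate from \eqref{eq:3}: for such a polynomial the derivative of order $k+2q$ vanishes identically, so the right-hand side of the estimate is zero and $\Delta^{k,q}_h G_{0,h}(u)=u^{(k)}(0)$ with no error. Since $k\le R-1$ guarantees $q=\rk{k}\ge 1$, the degree $k$ of $f(T^k_{h,n})$ satisfies $k\le k+2q-1$, and hence
\begin{equation*}
  v^{(k+1)}_{h,n}=\Delta^{k,q}_h\Big(G_{0,h}\big(f(T^k_{h,n})\big)\Big)
  =\frac{d^k}{d\rho^k}\Big[A\,T^k_{h,n}(\rho)\Big]\Big|_{\rho=0}
  =A\,v^{(k)}_{h,n}=A^{k+1}v_{h,n},
\end{equation*}
completing the induction and the argument.

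The part needing the most care is simply ensuring $q\ge 1$ throughout, i.e. that the finite difference step is only ever invoked for $k\le R-1$; this is exactly the range in which \eqref{eq:15} computes $v^{(k+1)}_{h,n}$, so no degenerate $q=0$ case arises and the degree of $f(T^k_{h,n})$ never exceeds the exactness threshold $k+2q-1$. Everything else reduces to the elementary fact that extracting the $k$-th derivative of $A\,T^k_{h,n}$ at $\rho=0$ isolates the coefficient $A v^{(k)}_{h,n}$.
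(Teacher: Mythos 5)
Your argument is correct and follows essentially the same route as the paper: induction on $k$ to show $v^{(k)}_{h,n}=A^kv_{h,n}$, using linearity to reduce the finite-difference step to $\Delta^{k,q}_h$ applied to the degree-$k$ polynomial $T^k_{h,n}$ and invoking the exactness of $\Delta^{k,q}_h$ on such polynomials via \eqref{eq:3}. The only cosmetic difference is that the paper pulls $A$ outside the difference operator by linearity of $\Delta^{k,q}_h$ before applying exactness, whereas you apply it directly to $A\,T^k_{h,n}$; the substance is identical.
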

  \begin{proof}
We consider $f(u)=Au$, for an $m\times m$ matrix $A$, and first
prove  by induction on $k\leq R$  that
\begin{equation}\label{eq:800}
  v_{h,n}^{(k)}=A^k v_{h,n},
\end{equation}
the case
$k=0$ being clear from \eqref{eq:15}. Assume 
$k+1\leq R$, 
$v_{h,n}^{(k)}=A^k v_{h,n}$ and use \eqref{eq:15} and the linearity
of $    \Delta_{h}^{k,q}$ to write
\begin{align}\label{eq:810}
  v^{(k+1)}_{h,n} &=
    \Delta_{h}^{k,q}\left(G_{0,h}\big(f(T_{h,n}^{k})\big)\right)=A
    \Delta_{h}^{k,q}\left(G_{0,h}\big(T_{h,n}^{k}\big)\right),
  \end{align}
  for $q=\left\lceil
        \frac{R-k}{2}\right\rceil$.
  Now \eqref{eq:3}, the fact that $T_{h,n}^{k}$ is the $k$-th degree
  polynomial given in \eqref{eq:300} and the induction hypothesis yield
  \begin{align*}
    \Delta^{k,q}_{h}\left(
    G_{0,h}(T_{h,n}^{k})\right)&=(T_{h,n}^{k})^{(k)}(0)=v_{h,n}^{(k)}=A^{k}v_{h,n},    
  \end{align*}
  which, together with \eqref{eq:810}, therefore yields \eqref{eq:800}
  for $k+1$, thus concluding the  proof by induction of \eqref{eq:800}.

  Now, \eqref{eq:60} immmediately gives  that $v_{h,n+1}=Q(hA)v_{h,n}$, where $Q$
  is the $R$-degree polynomial in  \eqref{eq:QTaylor}.
\qed  
\end{proof}  

\section{Relationship with Runge-Kutta schemes}
\label{rk}
In \cite{BaezaBoscarinoMuletEtAl2017}
it was  conjectured that the $R$-th- order approximate Taylor method  could be
cast as a Runge-Kutta method with $R^2+\bigO(R)$ stages, which  is
asymtotically larger than the upper bound  on the number of stages to
achieve $R$-th order $\frac{3}{8}R^2+\bigO(R)$
 given in \cite{Butcher2008}.

In this section we prove that the approximate Taylor methods are
indeed Runge-Kutta schemes, by properly identifying their
corresponding Butcher arrays.
The following result, which is taken from \cite{ZorioBaezaMulet17}, is at
the foundation of this identification.

\begin{theorem}\label{th:1}
  For any  $p, q\in\mathbb{N}$, there exist  $\beta_{l}^{p,q}$,
  $l=0,\dots,s:=\lfloor \frac{p-1}{2}\rfloor+q$ such that
  \begin{align}\label{eq:1003bis}
    \Delta^{p,q}_{h} v=\frac{1}{h^p}\sum_{l=0}^{s}\beta_{l}^{p,q}(v_{h,l}+(-1)^pv_{h,-l})
  \end{align}
  satisfies \eqref{eq:3}.
\end{theorem}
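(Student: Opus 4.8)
The plan is to construct the coefficients directly from the order conditions and to read off the claimed symmetric structure from parity. Writing $v=G_{a,h}(u)$, so that $v_{h,l}=u(a+lh)$, and expanding each sample in a Taylor series about $a$, the right-hand side of \eqref{eq:1003bis} becomes
\begin{equation*}
  \frac{1}{h^p}\sum_{l=0}^{s}\beta_{l}^{p,q}\bigl(u(a+lh)+(-1)^p u(a-lh)\bigr)
  =\sum_{j\geq 0}\frac{h^{j-p}}{j!}\bigl(1+(-1)^{p+j}\bigr)\Bigl(\sum_{l=0}^{s}\beta_{l}^{p,q}\, l^{j}\Bigr)u^{(j)}(a).
\end{equation*}
The factor $1+(-1)^{p+j}$ vanishes unless $j\equiv p\pmod 2$, so only terms whose differentiation order has the parity of $p$ can survive; this is precisely the mechanism that forces the symmetric ($p$ even) or antisymmetric ($p$ odd) combinations appearing in \eqref{eq:1003bis}. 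Requiring the operator to reproduce $u^{(p)}(a)$ with an error $\bigO(h^{2q})$ thus reduces to the moment conditions $\sum_{l}\beta_{l}^{p,q}l^{j}=\tfrac{p!}{2}\,\delta_{jp}$ for every $j$ of the parity of $p$ with $0\le j\le p+2q-2$.

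First I would check that this reduced system is square. For even $p$ the relevant indices are $j=0,2,\dots,p+2q-2$, giving $p/2+q$ equations in the unknowns $\beta_0^{p,q},\dots,\beta_s^{p,q}$, and $s=\tfrac p2-1+q$ supplies exactly $s+1=p/2+q$ of them. For odd $p$ the node $l=0$ never contributes, since all surviving $j$ are odd; the indices are $j=1,3,\dots,p+2q-2$, giving $(p-1)/2+q$ equations in the $s=(p-1)/2+q$ unknowns $\beta_1^{p,q},\dots,\beta_s^{p,q}$. In both cases the number of equations matches the number of free coefficients, which is the content of the definition $s=\lfloor\frac{p-1}2\rfloor+q$.

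Next I would establish solvability by exhibiting the coefficient matrix as a Vandermonde matrix. For even $p$, writing $j=2t$, the matrix has entries $l^{2t}$ with $t=0,\dots,s$ and $l=0,\dots,s$, a Vandermonde matrix in the pairwise distinct nodes $0,1,4,\dots,s^2$, hence nonsingular. For odd $p$, writing $j=2t+1$, the entries are $l^{2t+1}=l\,(l^2)^{t}$ with $l=1,\dots,s$; factoring $l$ out of each column leaves a Vandermonde matrix in the distinct nodes $1,4,\dots,s^2$, which together with $l\ge 1$ gives a nonsingular matrix. In either case the $\beta_l^{p,q}$ are uniquely determined.

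Finally, the accuracy claim \eqref{eq:3} follows by inspecting the first surviving term of the expansion: the index $j=p+2q-1$ has the wrong parity and drops out automatically, so the leading error arises from $j=p+2q$ and is proportional to $h^{2q}u^{(p+2q)}(a)$. Replacing the formal Taylor series by Taylor's formula with integral (or Lagrange) remainder and bounding that remainder over the stencil $|x-a|\le s\,h$ reproduces \eqref{eq:3} with $r_{p,q}=s$ and an explicit constant $K_{p,q}$. I expect the one delicate point to be the parity bookkeeping that makes the reduced system exactly square; once the surviving indices are counted correctly, recognizing the Vandermonde structure, and hence nonsingularity, is routine.
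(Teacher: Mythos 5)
Your argument is correct: the Taylor expansion with the parity factor $1+(-1)^{p+j}$, the count showing the reduced moment system is square ($p/2+q$ equations and unknowns for even $p$, $(p-1)/2+q$ for odd $p$ with the $l=0$ node dropping out), the Vandermonde nonsingularity in the distinct nodes $l^2$, and the $j=p+2q-1$ term vanishing by parity so that the Lagrange remainder gives the bound \eqref{eq:3} with $r_{p,q}=s$ — all of this is sound. Note, however, that the paper itself does not prove this theorem: it imports it from \cite{ZorioBaezaMulet17}, so there is no in-paper proof to compare against. Your derivation is the standard construction of symmetric/antisymmetric centered finite-difference weights via order conditions, which is essentially the argument behind Proposition 1 of that reference; the only point worth making explicit in a final write-up is that for even $p$ the $l=0$ column uses the convention $0^0=1$ in the Vandermonde matrix, and that the existence (not uniqueness) of the $\beta_l^{p,q}$ is all the theorem asserts.
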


With the notation:
  \begin{align}\label{eq:4}
    f_{k,j}&=f(T_{h,n}^{k}(jh))=G_{0,h}\big(f(T_{h,n}^{k})\big)_j
\end{align}
and \eqref{eq:60} and \eqref{eq:1003bis},
equation \eqref{eq:15} yields:
\begin{equation}\label{eq:5}
\begin{aligned}
w_{h,n}^{(l+1)}=\frac{h^{l}}{(l+1)!}v^{(l+1)}_{n} &=
\frac{1}{(l+1)!}
\sum_{i=-m_{l,R}}^{m_{l,R}} \gamma_{i}^{l,R}f_{l,i},\\
&m_{l,R}=s_{l,\rk{l}},
\gamma_{i}^{l,R}= (sign(i))^l\beta_{i}^{l,\rk{l}}.
\end{aligned}
\end{equation}
This equation is also valid for $l=0$ if one takes $m_{0,R}=0$ and
$\gamma_{0}^{0,R}=1$:

\begin{equation*}
  w_{h,n}^{(1)}=v_{h,n}^{(1)}=f(v_{h,n})=f_{0,0}.
\end{equation*}  

From \eqref{eq:4} and \eqref{eq:5}:
\begin{align}
\notag
  f_{k,j}&=f(T_{h,n}^{k}(jh))=f(v_{h,n}+h\sum_{l=0}^{k-1}w_{h,n}^{(l+1)}j^{l+1})\\
  \notag
    &=f\big(v_{h,n}+h\sum_{l=0}^{k-1}(\frac{1}{(l+1)!}
\sum_{i=-m_{l,R}}^{m_{l,R}} \gamma_{i}^{l,R}f_{l,i})j^{l+1}\big)\\
\notag
f_{k,j}&=f\big(v_{h,n}+h\sum_{l=0}^{k-1} 
\sum_{i=-m_{l,R}}^{m_{l,R}}\frac{j^{l+1}\gamma_{i}^{l,R}}{(l+1)!} f_{l,i}\big).
\end{align}
Notice that $f_{k,0}=f_{0,0}=f(v_{h,n})$ $\forall k$.
Therefore
\begin{align}\label{eq:6}
  f_{k,j}&=f\big(v_{h,n}+h\sum_{l=0}^{k-1} 
\frac{j^{l+1}\gamma_{0}^{l,R}}{(l+1)!} f_{0,0}+h\sum_{l=0}^{k-1} 
\sum_{i=-m_{l,R}, i\neq 0}^{m_{l,R}}\frac{j^{l+1}\gamma_{i}^{l,R}}{(l+1)!} f_{l,i}\big).
\end{align}

Similarly, from \eqref{eq:5}, equation \eqref{eq:60} reads:
\begin{align}
\label{eq:7}
  v_{h,n+1}=v_{h,n}+h\sum_{l=0}^{R-1} 
\frac{j^{l+1}\gamma_{0}^{l,R}}{(l+1)!} f_{0,0}+h\sum_{l=0}^{R-1} 
\sum_{i=-m_{l,R}, i\neq 0}^{m_{l,R}}\frac{j^{l+1}\gamma_{i}^{l,R}}{(l+1)!} f_{l,i}
\end{align}

Let us consider the subset of $\mathbb Z^2$:
\begin{align*}
  \mathcal{S}_{R}=
  \{(0,0)\}\cup
\Big\{(l, i) / l\in \{1,R-1 \}, i\in \{-m_{l,R}, \dots,
m_{l,R} \}\setminus\{0\}\Big\},
\end{align*}
with size 
\begin{equation}
  \label{eq:8}
n_R:=|\mathcal{S}_{R}|=1+2\sum_{l=1}^{R-1} m_{l,R} 
\end{equation}
and with the lexicographical ordering given by the bijection
\begin{equation*}
  I_R\colon \mathcal{S}_R \to \{1,\dots,n_R \},\quad
  I_{R}(l, i)
  =
  \begin{cases}
    0& l=0\\
    2+2\sum_{k<l} m_{k,R}+i+m_{l,R} & 0<l< R, i<0\\
    1+2\sum_{k<l} m_{k,R}+i+m_{l,R} & 0<l< R, i>0.
  \end{cases}    
\end{equation*}
It can be proven that for $l=1,\dots,R-1$:
\begin{align*}
  m_{l,R}=
  \begin{cases}
    \frac{R-1}{2} & \text{$R$ is odd}\\
      \frac{R}{2}-1 & \text{$R$ is even, $l$ is even}\\
      \frac{R}{2} & \text{$R$ is even, $l$ is odd}
  \end{cases}    ,
\end{align*}
and from here and \eqref{eq:8} that
\begin{align*}
  n_R=
  \begin{cases}
    1+(R-1)^2 & \text{$R$ is odd}\\
    2+(R-1)^2 & \text{$R$ is even}
  \end{cases}    
\end{align*}

We define the $n_R\times n_R$ matrix $A^{(R)}$ whose nonzero entries
are given by:
\begin{align}\label{eq:1000}
  A^{(R)}_{I_R(k,j), I_R(0, 0)}&=  \sum_{l=0}^{k-1} 
\frac{j^{l+1}\gamma_{0}^{l,R}}{(l+1)!} &
A^{(R)}_{I_R(k,j), I_R(l,i)}&=\frac{j^{l+1}\gamma_{i}^{l,R}}{(l+1)!},
\end{align}
for $k=1,\dots,R-1$, $j=-m_{k,R},\dots,m_{k,R}, j\neq 0$ and
$l=1,\dots,k-1$, $i=-m_{l,R},\dots,m_{l,R}, i\neq 0$. 

We also define the $1\times n_R$ vector $b^{(R)}$ as follows:
\begin{align}\label{eq:1100}
b^{(R)}_{I_R(0,0)}&=  \sum_{l=0}^{R-1} 
\frac{\gamma_{0}^{l,R}}{(l+1)!} &  b^{(R)}_{I_R(l,
  i)}&=\frac{\gamma_{i}^{l,R}}{(l+1)!}, l>0,
\end{align}
for $l=1,\dots,R-1$, $i=-m_{l,R},\dots,m_{l,R}, i\neq 0$.

With this notation we have the following

\begin{theorem}\label{th:2}
  \leavevmode
  \begin{enumerate}
  \item \label{it:0}   The scheme defined by \eqref{eq:15} and
    \eqref{eq:60} is  a $n_R$-stages Runge-Kutta scheme with Butcher
    array given by 
\begin{align*}
  \begin{array}{c|c}
    c^{(R)}&A^{(R)}\\    \hline
    &b^{(R)}
  \end{array},    
\end{align*}
where $c^{(R)}_{i}=\sum_{j=1}^{i-1} A^{(R)}_{i,j}$.
    
  \item \label{it:1}   $A^{(R)}$ is a block strictly lower $R\times R$ triangular
    matrix, with blocks of sizes $1, m_{1,R},\dots,m_{R-1,R}$.
  \item \label{it:2}$(A^{(R)})^R=\mathbf{0}$.
  \item \label{it:3} $\text{rank} A^{(R)}=R$.
  \end{enumerate}      
\end{theorem}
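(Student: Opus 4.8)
The plan is to read off the Butcher array directly from the already–derived identities \eqref{eq:6} and \eqref{eq:7}. I would declare the internal stages to be indexed by $\mathcal S_R$ through $I_R$, the stage attached to $(k,j)$ being the argument $T_{h,n}^{k}(jh)$ of $f$ appearing in \eqref{eq:4}, so that its stage derivative is precisely $f_{k,j}$. Then \eqref{eq:6} states exactly that this stage equals $v_{h,n}+h$ times a linear combination of the stage derivatives $f_{l,i}$ with $l<k$, whose coefficients are by definition the entries of row $I_R(k,j)$ of $A^{(R)}$ in \eqref{eq:1000}; the stage $(0,0)$ carries $f_{0,0}=f(v_{h,n})$ and contributes the zero row. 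Likewise \eqref{eq:7} identifies the update as $v_{h,n}+h\sum b^{(R)}_\bullet f_\bullet$ with the weights of \eqref{eq:1100}, and taking $c^{(R)}_i$ to be the row sums is the usual abscissa normalization. The only work here is index bookkeeping: matching the ranges in \eqref{eq:1000}–\eqref{eq:1100} to those of the sums in \eqref{eq:6}–\eqref{eq:7}, and checking that the terms $f_{k,0}=f_{0,0}$ have been correctly folded into the $(0,0)$ column.

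For the triangular structure I would simply note from \eqref{eq:1000} that $A^{(R)}_{I_R(k,j),\,I_R(l,i)}$ can be nonzero only when $l<k$. Grouping the indices of $\mathcal S_R$ by their first coordinate $l=0,1,\dots,R-1$ partitions rows and columns into $R$ consecutive blocks (the $l$-th collecting the indices $(l,i)$), and the constraint $l<k$ forces every entry on or above the block diagonal to vanish, so $A^{(R)}$ is block strictly lower triangular. The nilpotency is then immediate: for such a matrix the block $(a,b)$ of $(A^{(R)})^s$ is a sum over chains $a>c_1>\dots>b$ of length $s$ and hence can be nonzero only if $a\ge b+s$; since the block indices lie in $\{0,\dots,R-1\}$ we have $a-b\le R-1<R$, whence $(A^{(R)})^R=\mathbf 0$.

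The rank is the one genuinely non-formal point, and I would attack it from the column side. The decisive structural fact, read off \eqref{eq:1000}, is that within a fixed level $l\ge1$ the column $I_R(l,i)$ has entry $\gamma^{l,R}_i\cdot\frac{j^{l+1}}{(l+1)!}$ in row $I_R(k,j)$, so all columns of level $l$ are scalar multiples — through the scalars $\gamma^{l,R}_i$ — of the single vector $j\mapsto j^{l+1}/(l+1)!$ supported on the rows with $k>l$. Thus each level contributes at most one dimension, together with the lone $(0,0)$ column, which gives the upper bound $\operatorname{rank}A^{(R)}\le$ (number of derivative levels occurring as columns). For the matching lower bound — the hard part — I would restrict to the rows $I_R(R-1,j)$, where \eqref{eq:6} writes the row as $\sum_l \frac{j^{l+1}}{(l+1)!}\,e_l$ with $e_l:=(\gamma^{l,R}_i)_{i\neq0}$ the (nonzero) level-$l$ coefficient vector; letting $j$ run over the available nonzero abscissae $\pm1,\dots,\pm m_{R-1,R}$ and invoking the nonsingularity of the attached Vandermonde matrix in the powers $j,j^2,\dots$ shows that these rows already span every $e_l$, so the rank is at least that same count. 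Verifying that $2m_{R-1,R}$ supplies enough distinct abscissae — which is exactly where the explicit values of $m_{l,R}$ recorded before \eqref{eq:1000} enter — then pins the rank down to its asserted value.
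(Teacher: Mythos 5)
Your treatment of items (\ref{it:0})--(\ref{it:2}) is essentially the paper's: item (\ref{it:0}) is read off from \eqref{eq:6}, \eqref{eq:7}, \eqref{eq:1000} and \eqref{eq:1100}; item (\ref{it:1}) from the constraint $l<k$ in \eqref{eq:1000}; item (\ref{it:2}) from the block strict lower triangularity. Nothing to object to there.

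For item (\ref{it:3}) you take a genuinely different route. The paper argues by columns: within level $l$ all columns are proportional to a single nonzero column $A^{(R)}(:,I_R(l,i_l))$, and one such column per level, together with the $(0,0)$ column, forms an echelon family (the first nonzero block of a level-$l$ column sits in block row $l+1$), hence is linearly independent. You instead work with the rows of the last block and a Vandermonde matrix in the abscissae $j$. Your route is workable --- the requirement that $2m_{R-1,R}$ be at least the number of levels is met by the explicit values of $m_{l,R}$ --- but you assert in passing that each coefficient vector $e_l$ is nonzero, i.e.\ that some $\gamma_i^{l,R}\neq 0$ with $i\neq 0$; the paper extracts this from the proof of Theorem \ref{th:1} (a centered formula for an $l$-th derivative, $l\ge 1$, cannot have all off-center coefficients zero), and you need to say as much, since without it a whole level could drop out of both bounds.

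The genuine problem is the count you defer as ``bookkeeping.'' In \eqref{eq:1000} the column level of any nonzero entry satisfies $l\le k-1\le R-2$, so every column of level $R-1$ is identically zero: no stage uses $f_{R-1,i}$ as an input (those values enter only through $b^{(R)}$). Consequently your upper bound is $1+(R-2)=R-1$, and your Vandermonde argument on the rows $I_R(R-1,j)$ likewise recovers only the $R-1$ vectors $e_0,\dots,e_{R-2}$. Both of your bounds therefore meet at $R-1$, not at the asserted value $R$, and the claim that the count ``pins the rank down to its asserted value'' glosses over precisely the step that fails; for $R=2$ the matrix is $3\times 3$ with only its first column nonzero, so the rank is visibly $1$. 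Be aware that the paper's own proof lists $A^{(R)}(:,I_R(R-1,i_{R-1}))$ among its $R$ independent columns, yet under \eqref{eq:1000} that column vanishes, so switching to the column argument does not rescue the value $R$ either: you must either exhibit a genuinely nonzero column at level $R-1$ (there is none under the stated definitions) or accept that your argument establishes $\text{rank}\,A^{(R)}=R-1$.
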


\begin{proof}
  Item \ref{it:0} follows from \eqref{eq:6}, \eqref{eq:7},
  \eqref{eq:1000} and  \eqref{eq:1100}.
  
  Item \ref{it:1} is clear from \eqref{eq:1000} and item \ref{it:2}
  from this one. From \eqref{eq:1000} 
  \begin{equation*}
    A^{(R)}_{I_R(1,1), 1}=      \gamma_{0}^{0,R}=1,
  \end{equation*}
  therefore $A^{(R)}(:, 1)\neq \mathbf{0}$.

  From the proof of Theorem \ref{th:1}, for any  $1\leq l<R-1$
  there exists $i\in\{-m_{l,R}, \dots, m_{l,R}\}\setminus \{0\}$ such
  that $\gamma_{i}^{l,R}\neq 0$.
  Therefore, for any $1\leq k<R$,
  and any $j\in\{-m_{k,R}, \dots, m_{k,R}\}\setminus \{0\}$ and any $1\leq l<k$
  there exists $i_l\in\{-m_{l,R}, \dots, m_{l,R}\}\setminus \{0\}$ such
  that $\gamma_{i_l}^{l,R}\neq 0$. 

From \eqref{eq:1000}, $A^{(R)}_{I_R(k,j), I_R(l,i_l)}\neq 0$ and 
\begin{align*}
A^{(R)}_{I_R(k,j), I_R(l,i)}&=A^{(R)}_{I_R(k,j), I_R(l,i_l)}
\frac{\gamma_{i}^{l,R}}{\gamma_{i_l}^{l,R}}
\end{align*}
We have just proven that the columns $A^{(R)}(:, I_R(l,i))$,
$i=-m_{l,R},\dots,m_{l,R}$, $i\neq 0$, are
proportional to $A^{(R)}(:, I_R(l,i_l))\neq 0$.

By the structure of
the matrix, $$A^{(R)}(:, 1),A^{(R)}(:, I_R(1,i_1)), \dots, A^{(R)}(:,
I_R(R-1,i_{R-1}))$$ are linearly independent and all the columns of the
matrix are proportional to one of these. Therefore, $\text{rank}A^{(R)}=R$.
\end{proof}

\section*{Acknowledgments}
 Antonio Baeza, Pep Mulet and David Zor\'{\i}o are 
  supported by Spanish MINECO grants MTM 2014-54388-P and
  MTM2017-83942-P. D. Zor\'{\i}o is also 
  supported by Fondecyt project 3170077. Giovanni Russo and Sebastiano Boscarino have been partially supported by Italian PRIN 2009 project ``Innovative numerical methods for hyperbolic problems with application to fluid dynamics, kinetic theory, and computational biology'', Prot. No. 2009588FHJ.

\end{document}